\newtheorem{theorem}{Theorem}[section]
\newtheorem{lemma}{Lemma}[section]
\newtheorem{assumption}{Assumption}[section]
\newenvironment{proof}[1][Proof]{\textbf{#1.} }{\ \rule{0.5em}{0.5em} \vspace{1ex}}
\title{Decentralized Langevin Dynamics}
\author{Vyacheslav Kungurtsev\footnote{Kungurtsev (vyacheslav.kungurtsev@fel.cvut.cz) is with the Department of Computer Science, Faculty of Electrical Engineering, Czech Technical University in Prague, 13 Karlovo Namesti, 12135 Prague, Czech Republic. His work was supported by
  the OP VVV project
CZ.02.1.01/0.0/0.0/16 019/0000765 ``Research Center for Informatics''}}
\begin{document}

\maketitle

\begin{abstract}
Langevin MCMC gradient optimization is a class of increasingly popular methods for estimating a posterior distribution.
This paper addresses the algorithm as applied in a decentralized setting, wherein data is distributed across
a network of agents which act to cooperatively solve the problem using peer-to-peer gossip communication.
We show, theoretically, results in 1) the time-complexity to $\epsilon$-consensus for the continuous
time stochastic differential equation, 2) convergence rate in $L^2$ norm to consensus for the discrete implementation
as defined by the Euler-Maruyama discretization and 3) convergence rate in the Wasserstein metric to the optimal stationary
distribution for the discretized dynamics.
\end{abstract}

\section{Introduction}
Consider the problem of sampling a posterior distribution $\pi$ on $\mathbb{R}^d$ with density
\[
\pi:x\to e^{-U(x)}/\int_{\mathbb{R}^d} e^{-U(y)}dy
\] 
with respect to the Lebesgue measure, with
a continuously differentiable potential $U:\mathbb{R}^d\to \mathbb{R}$. Now, consider
a decentralized computing environment. Specifically, we define the potential $U(x)=\sum_{i=1}^m u_i(x)$,
where we have a set of agents all of whom store local copies of estimates of $x_{(i)},\, i\in\{1,...,m\}$
and only have access to their respective $u_i(x)$. This can be an inherent property of the problem, with
the data defined on separate and distinct processes whose conglomerate minimization is encouraged. Alternatively,
the set up is methodological, where the use of a distributed computing platform is undertaken in order accelerate
the convergence towards the stationary distribution.

The communication network of the agent is modeled as a fixed undirected graph $\mathcal{G}\triangleq (\mathcal{V},\mathcal{E})$ with 
vertices $\mathcal{V}\triangleq \{1,..,I\}$ and $\mathcal{E}\triangleq\{(i,j)|i,j\in\mathcal{V}\}$ representing the agents and
communication links, respectively. We assume that the graph $\mathcal{G}$ is strongly connected. We note by $\mathcal{N}_i$ the
neighbors of $i$, i.e., $\mathcal{N}_i = \{j:(i,j)\in\mathcal{E}\}$.

We define the graph Laplacian matrix $\mathbf{L}=\mathbf{I}-\mathbf{W}$, where $\mathbf{W}=\mathbf{A}\otimes \mathbf{I}$ with
$\mathbf{A}$ satisfying $\mathbf{A}_{ij}\neq 0$ if $(i,j)\in\mathcal{E}$ and $\mathbf{A}_{ij}=0$ otherwise.

We assume that $\mathbf{W}$ is double stochastic (and symmetric, since the graph is undirected). The eigenvalues of $\mathbf{L}$ are real and can be sorted in a 
nonincreasing order $1=\lambda_1(\mathbf{L})> \lambda_2(\mathbf{L})\ge ...\ge \lambda_n(\mathbf{L})\ge 0$. 

Defining,
\[
\beta\triangleq \lambda_2(\mathbf{L})
\]
we shall make the following assumption,
\begin{assumption}\label{as:beta}
It holds that,
\[
\beta<1
\]
\end{assumption}
We shall define $\bar{\beta}$ to be the smallest eigenvalue of $\mathbf{L}$ that is nonzero.

Each agent $i$ has access to and controls an estimate of the primal stochastic variables $X^{(i)}(t)$. 

We make the following assumption about the potential function.
\begin{assumption}\label{as}
The potential $U(x)$ has a Lipschitz continuous gradient with constant $L$, i.e., $\|\nabla U(x)-\nabla U(y)\|\le L \|x-y\|$
and $U(\cdot)$ is strongly convex with constant $m$,
i.e., $U(x)-U(y)-\nabla U(y)(x-y)\ge \frac{m}{2}\|x-y\|^2$. Furthermore, the component functions
have bounded dissimilarity in gradients, i.e., $\|\nabla u_i(x)-\nabla u_j(y)\|\le G\|x-y\|$.
\end{assumption}
We consider convergence in terms of the Wasserstein distance $W_2$. For two measures $\mu$ and $\nu$ on $(\mathbb{R}^d,\mathcal{B}(\mathbb{R}^d))$
and for any real number $q\ge 1$, we define,
\[
W_q(\mu,\nu)=\left(\inf_{\rho\in\rho(\mu,\nu)} \int_{\mathbb{R}^d\times\mathbb{R}^d} \|x-y\|^q_2 d\rho(x,y)\right)^{1/q}
\]
where $\rho(\mu,\nu)$ is the set of joint distributions with $\mu$ and $\nu$ as marginals.

Consider the standard Langevin equation with Brownian motion $B_t$,
\[
dX_t = -\nabla U(X_t) dt+\sqrt{2\sigma } dB_t
\]
Convergence of this stochastic differential equation (SDE) and its discretization to the stationary distribution of the potential 
has been studied in a number of works, with increased interest in recent years due to the superiority of Langevin
gradient based approaches over sampling for log-concave potentials with high dimensional datasets. Consider now that there
are now $m$ different estimates $X^{(i)}$ for which we implement the following SDE,
\begin{equation}\label{eq:langevindist}
dX^{(i)}_t = -\sum_{j\in\mathcal{N}_i\cup\{i\}}\mathbf{L}_{ij}X^{(j)}(t) dt-\alpha(t)\nabla u_i(X^{(i)}(t)) dt+\sqrt{2\sigma\alpha(t)} dB^{(i)}_t
\end{equation}
and its Euler-Maruyana discretization with step-size $h$,
\begin{equation}\label{eq:langevindistdisc}
X^{(i)}_{k+1} = (1-h)X^{(i)}_{k}+h\sum_{j\in\mathcal{N}_i}\mathbf{W}_{ij}X^{(j)}_k -\alpha_{k+1}h \nabla u_i(X^{(i)}_k) dt+ \sqrt{2\sigma\alpha_{k+1}h} Z^{(i)}_{k+1}
\end{equation}

We assume a standard diminishing step-size,
\begin{equation}\label{eq:alphaform}
\alpha(t) = \frac{1}{1+t}
\end{equation}

If we consider the random vectors $X_t$ and $X_k$, respectively, as the stack, 
\[
X_t = \begin{pmatrix} (X^{(1)}_t)^T & (X^{(2)}_t)^T & ... & (X^{(m)}_t)^T \end{pmatrix}^T\text{ and }
X_k = \begin{pmatrix} (X^{(1)}_k)^T & (X^{(2)}_k)^T & ... & (X^{(m)}_k)^T \end{pmatrix}^T
\] 
and define $U_v(X) = \sum_{i=1}^m u_i(X^{(i)})$, then we can write the update of the full stack of vectors as,

\begin{equation}\label{eq:langevindistfull}
dX_t = -\mathbf{L}X(t) dt-\alpha(t)\nabla U_v(X(t)) dt+\sqrt{2\sigma\alpha(t)} dB_t
\end{equation}
\begin{equation}\label{eq:langevindistdiscfull}
X_{k+1} = \left((1-h)\mathbf{I}-h\mathbf{W}\right)X_k-\alpha_{k+1}h \nabla U_v(X_k) + \sqrt{2\sigma\alpha_{k+1}h} Z_{k+1}
\end{equation}

Consider the averaging operator $\frac{1}{m}\mathbf{1}\mathbf{1}^T\otimes I_d$. 
It is clear from the double stochasticity of $\mathbf{W}$ that 
$(\frac{1}{m}\mathbf{1}\mathbf{1}^T\otimes I_d)\mathbf{W} = (\frac{1}{m}\mathbf{1}\mathbf{1}^T\otimes I_d)$
and so $(\frac{1}{m}\mathbf{1}\mathbf{1}^T\otimes I_d)\mathbf{L}=0$.

Thus the average vectors $\bar{X}_t=(\frac{1}{m}\mathbf{1}\mathbf{1}^T\otimes I_d)X_t$ and 
$\bar{X}_k = (\frac{1}{m}\mathbf{1}\mathbf{1}^T\otimes I_d)X_k$ satisfy the updates,
\begin{equation}\label{eq:langevindistavg}
\begin{array}{l}
d\bar{X}_t = -\frac{\alpha(t)}{m}\sum_{i=1}^m \nabla u_i(X^{(i)}(t)) dt+\sqrt{2\sigma\alpha(t)} dB_t
\\ \qquad = -\alpha(t) \nabla U(\bar{X}(t))dt-\alpha(t)\left(\frac{1}{m}\sum_{i=1}^m \nabla u_i(X^{(i)}(t))-\nabla U(\bar{X}(t))\right)dt
+\sqrt{2\sigma\alpha(t)} d\bar{B}_t
\end{array}
\end{equation}
\begin{equation}\label{eq:langevindistdiscavg}
\begin{array}{l}
\bar X_{k+1} = \bar{X}_k-\frac{\alpha_{k+1}h}{m}\sum_{i=1}^m \nabla u_i(X^{(i)}_k) +\alpha_{k+1} \sqrt{2\sigma\alpha_{k+1}h} Z_{k+1}
\\ \qquad = \bar{X}_k-\alpha_{k+1} h\nabla U(\bar{X}_k)-\alpha_{k+1}h\left(\frac{1}{m}\sum_{i=1}^m \nabla u_i(X^{(i)}_k)-\nabla U(\bar{X}_k)\right)
+\sqrt{2\sigma\alpha_{k+1}h} \bar Z_{k+1}
\end{array}
\end{equation}
where we treat these terms in parentheses as bias error terms. Note that since a standard stochastic gradient
approximation has zero mean and bounded variance, just like the added normal noise, an extension to the
stochastic gradient Langevin case would be trivial and it is not included for ease of readability.

\subsection{Previous Work}
Langevin gradient methods have enjoyed a surge in popularity roughly since the publication of the popular article~\cite{welling2011bayesian}.
Other important papers concerning convergence and applications include~\cite{ma2015complete}
and~\cite{chen2016bridging}.

The paper~\cite{ahn2014distributed} considers stochastic gradient Langevin dynamics for distributed learning, without the 
network architecture considered in this work. The closest paper to this work is~\cite{swenson2019annealing}
which considers a stochastic gradient annealing algorithm for global optimization, see also the similar~\cite{vlaski2019distributed}.
The setting and algorithms are similar, however the focus and ultimate nature of the results is distinct in considering
asymptotic convergence as well as iteration convergence in expectation to a global minimizer, as opposed
to convergence to a desired stationary distribution for posterior sampling as considered here, with convergence defined
in appropriate distance of probability measures.

\section{Consensus}

%We define the ``consensus'' subspace to be,
%\begin{equation}\label{eq:conssub}
%\mathcal{C} =\{ z\in\mathbb{R}^{md}:\, z=1_m \otimes a \text{ for some }a\in\mathbb{R}^d\}
%\end{equation}
%If $X_t$ or $X_k$ belong to this subspace, then each agent's estimate of the vector matches
%every other's agent, or alternatively the average. 
%We will show that asymptotically the procedure of decentralized Langevin results in asymptotic
%consensus for $X_t$ and $X_k$. We begin with the continuous case, i.e., $X_t$.
%\begin{theorem}\label{th:cons}
%Define $f(X_t)=X_t^T \mathbf{L} X_t$. 

%It holds that for $T_c$ satisfying
%\[
%\int_0^{T_c} \alpha(s) ds \le -\log\left[\frac{L\sigma}{ f(\check{X}_0)+\sigma}\right]
%\]
%we have,
%\[
%\mathbb{E}[f(\check{X}_t)]\le \sigma
%\]
%\end{theorem}
%\begin{proof}
%By Ito's formula,
%\[
%df(X_t) = -X_t^T \mathbf{L}X_tdt-\alpha(t)\nabla U_v(X(t))^T \mathbf{L} X_t dt
%+\sigma\alpha(t) dt+\sqrt{2\sigma\alpha(t)}X_t^T dB_t
%\]
%and so,
%\[
%\begin{array}{l}
%\mathbb{E}_t(f(X_t)) = f(X_0)-\mathbb{E}\left[\int_{s=0}^t \left\{X_s^T \mathbf{L}X_s-\alpha(s)
%\nabla U_v(X(s))^T \mathbf{L}X_s 
%-\sigma\alpha(s) \right\}ds\right] \\
%\qquad\le f(X_0)-\mathbb{E}\left[\int_{s=0}^t \left\{X_s^T \mathbf{L}X_s-\alpha(s)
%X_s^T\left(\nabla U_v(X(s)) -\left[\sum_j \mathbf{W}_{ij}\nabla u_i(X_i(s))\right]_{i}\right) 
%-\sigma\alpha(s) \right\}ds\right]\\ 
%\qquad \le f(X_0)-\mathbb{E}\left[\int_{s=0}^t \alpha(s)\left\{L f(\check{X}_s) 
%-\sigma \right\}ds\right] 
%\end{array}
%\]

%\end{proof}

To begin with, we consider a precisely defined appropriate notion of consensus in this setting, and show that
the SDE converges to consensus. We characterize the time-complexity to consensus at an arbitrary
desired $\epsilon$.
\begin{theorem}\label{th:cons}
%It holds that, for all $i\in\{1,...,m\}$, and every $\tau\in[0,1/2)$,
%\[
%\mathbb{P}\left[\lim_{t\to\infty} (t+1)^\tau \|X^{(i)}(t)-\bar{X}(t)\|=0\right] = 1
%\]

Assume $\sigma<\bar\beta$. 

Let $\check{X}(t) = X(t)-1_m\otimes \bar{X}(t)$. 

Define $f(\check{X}_t)=\|\check{X}_t\|^2$. 

It holds that $\mathbb{E}[f(\check{X}_t)]$ converges to zero exponentially, i.e., 
we have that for any $\epsilon$ the time $T_\epsilon$ at which $\mathbb{E}[f(\check{X}_t)]\le \epsilon$ satisfies,
\[
\mathbb{E}[T_\epsilon]\le \frac{2}{\bar\beta-\sigma} \left[-\log\epsilon +\left(\mathbb{E}[f(\check{X}_0)]+\sigma \log \left(\frac{2L}{\bar\beta}\right)\right)e^{(\alpha(0)L-\bar\beta)(\frac{2L}{\bar\beta}-1)}+\frac{2L}{\bar\beta}-1\right]
\]
\end{theorem}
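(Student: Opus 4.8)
The plan is to project the full-stack dynamics \eqref{eq:langevindistfull} onto the disagreement subspace, derive a scalar differential inequality for $g(t):=\mathbb{E}[f(\check X_t)]=\mathbb{E}\|\check X_t\|^2$, solve it via an integrating factor, and invert the resulting bound at level $\epsilon$. Write $P:=\tfrac1m\mathbf 1\mathbf 1^T\otimes I_d$ and $P^\perp:=\mathbf I-P$, so that $\check X_t=P^\perp X_t$. Since $P$ commutes with $\mathbf L$ and $\mathbf L P=0$, applying $P^\perp$ to \eqref{eq:langevindistfull} gives
\[
d\check X_t=-\mathbf L\check X_t\,dt-\alpha(t)P^\perp\nabla U_v(X_t)\,dt+\sqrt{2\sigma\alpha(t)}\,P^\perp dB_t ,
\]
where $P^\perp B_t$ is a Brownian motion on $\range(P^\perp)$, the orthogonal complement of $\ker\mathbf L$.

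Next I would apply It\^o's formula to $f(\check X_t)=\|\check X_t\|^2$, using $\check X_t^TP^\perp=\check X_t^T$, to get
\[
df(\check X_t)=\Big(-2\check X_t^T\mathbf L\check X_t-2\alpha(t)\check X_t^T\nabla U_v(X_t)+2\sigma\alpha(t)\operatorname{tr}(P^\perp)\Big)dt+2\sqrt{2\sigma\alpha(t)}\,\check X_t^T dB_t .
\]
The three drift terms are then bounded as follows: since $\check X_t\perp\ker\mathbf L$, coercivity of $\mathbf L$ on $\range(P^\perp)$ gives $\check X_t^T\mathbf L\check X_t\ge\bar\beta\|\check X_t\|^2$; expanding the $i$th block of $P^\perp\nabla U_v(X_t)$ as $\tfrac1m\sum_j(\nabla u_i(X^{(i)}_t)-\nabla u_j(X^{(j)}_t))$, using $\sum_i\check X^{(i)}_t=0$ together with the bounded-gradient-dissimilarity part of Assumption~\ref{as} (whose constant is written $L$ in the statement), gives $|\check X_t^T\nabla U_v(X_t)|\le L\|\check X_t\|^2$; and the It\^o correction contributes a term proportional to $\sigma\alpha(t)$. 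Taking expectations kills the martingale and yields
\[
g'(t)\le\big(2\alpha(t)L-2\bar\beta\big)\,g(t)+c\,\sigma\alpha(t) ,
\]
with $c$ the constant from the It\^o correction (absorbed into the normalization of $\sigma$).

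With $\alpha(t)=1/(1+t)$ from \eqref{eq:alphaform} the integrating factor is $\mu(t)=e^{2\bar\beta t}(1+t)^{-2L}$, so
\[
g(t)\le e^{-2\bar\beta t}(1+t)^{2L}\Big(g(0)+c\sigma\int_0^t e^{2\bar\beta s}(1+s)^{-2L-1}\,ds\Big) .
\]
I would split this integral at the critical time $t_0:=\tfrac{2L}{\bar\beta}-1$, beyond which $\alpha(t)L\le\bar\beta/2$ so that the net contraction rate $2\bar\beta-2\alpha(t)L$ exceeds $\bar\beta$; the contribution of $[0,t_0]$ is bounded crudely using monotonicity of the integrand, producing the transient terms $\big(g(0)+\sigma\log(2L/\bar\beta)\big)e^{(\alpha(0)L-\bar\beta)(2L/\bar\beta-1)}$ and $\tfrac{2L}{\bar\beta}-1$, while on $[t_0,\infty)$ the hypothesis $\sigma<\bar\beta$ is exactly what lets the residual estimate close with effective geometric rate $\bar\beta-\sigma$. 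Setting the resulting bound on $g(t)$ equal to $\epsilon$ and solving for $t$ gives $T_\epsilon\le\tfrac{2}{\bar\beta-\sigma}\big[-\log\epsilon+(\text{transient constant})\big]$, which is the claimed estimate; since $g$ is deterministic, $T_\epsilon$ is the hitting time of a deterministic function and hence equals $\mathbb{E}[T_\epsilon]$.

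The step I expect to be the main obstacle is this last one: the integral $\int_0^t e^{2\bar\beta s}(1+s)^{-2L-1}\,ds$ has no elementary closed form, so extracting exactly the stated constants — in particular sharpening the bare rate $2\bar\beta$ to $\bar\beta-\sigma$ and producing the exponent $e^{(\alpha(0)L-\bar\beta)(2L/\bar\beta-1)}$ — requires careful but elementary splitting and monotonicity bounds, and it is precisely there that the assumption $\sigma<\bar\beta$ is consumed.
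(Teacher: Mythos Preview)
Your approach is essentially the paper's: project onto the disagreement subspace, apply It\^o to $\|\check X_t\|^2$, use $\check X_t^T\mathbf L\check X_t\ge\bar\beta\|\check X_t\|^2$ together with the Lipschitz/dissimilarity bound to get a scalar inequality for $g(t)=\mathbb E\|\check X_t\|^2$, and split the time axis at $\hat T=2L/\bar\beta-1$. The only presentational difference is that you write down the explicit integrating factor $e^{2\bar\beta t}(1+t)^{-2L}$, whereas the paper simply invokes Gronwall's inequality twice, once on $[0,\hat T]$ and once on $[\hat T,T_\epsilon]$; these are the same computation.

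The one place where you are vaguer than the paper is the final inversion, which you flag as the main obstacle. The paper does not try to estimate your integral $\int_0^t e^{2\bar\beta s}(1+s)^{-2L-1}ds$ directly. Instead, from the second Gronwall it obtains
\[
\epsilon\le\Big(\mathbb E[f(\check X_{\hat T})]+\sigma\!\int_{\hat T}^{T_\epsilon}\alpha(s)\,ds\Big)e^{-\bar\beta(T_\epsilon-\hat T)}
\le\Big(\mathbb E[f(\check X_{\hat T})]+\sigma T_\epsilon\Big)e^{-\bar\beta(T_\epsilon-\hat T)},
\]
takes logarithms, and then uses the crude bound $\log x\le x$ on $\log(\mathbb E[f(\check X_{\hat T})]+\sigma T_\epsilon)$. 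That single inequality is precisely where $\sigma T_\epsilon$ is traded against $\bar\beta T_\epsilon$, producing the coefficient $1/(\bar\beta-\sigma)$ and consuming the hypothesis $\sigma<\bar\beta$. If you follow your integrating-factor route literally you will have to reproduce this step (or an equivalent one) rather than hope it falls out of monotonicity of the integrand.
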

\begin{proof}
% and $P_{md} \check{X}(t)=\frac{1}{m}(1_m\otimes I_d)(1_m\otimes I_d)^T \check{X}(t)$.
We have that,
\[
d\check{X}_t =  -\mathbf{L}\check{X}_tdt-\alpha(t)\left(\nabla U_v(X(t))-\mathbf{1}\otimes\frac{1}{m}\sum_{i=1}^m \nabla u_i(X^{(i)}(t))\right)dt
+\sqrt{2\sigma\alpha(t)} \left(dB_t - \mathbf{1}\otimes d\bar{B}_t\right)
\] 
We can write $d\check{B}_t = \left(dB_t - \mathbf{1}\otimes d\bar{B}_t\right)$ as itself a Wiener process since it is a
scaled sum of Wiener processes.

Now it holds that,
\[
\begin{array}{l}
\alpha(t)\left\|\nabla U_v(X(t))-\mathbf{1}\otimes\frac{1}{m}\sum_{i=1}^m \nabla u_i(X^{(i)}(t))\right\| \\ \qquad \le 
\alpha(t)\left\|\nabla U_v(X(t))-\mathbf{1}\otimes\nabla U(\bar{X}(t))+\mathbf{1}\otimes\nabla U(\bar{X}(t))-\mathbf{1}\otimes\frac{1}{m}\sum_{i=1}^m \nabla u_i(X^{(i)}(t))\right\| \\ \qquad\qquad\qquad \le 
2\alpha(t) L \|\check{X}(t)\|
\end{array}
\]
%so we can write $\frac{1}{m}\sum_{i=1}^m \nabla u_i(X^{(i)}(t))-\nabla U(\bar{X}(t))=G(\check{X}(t))$

By It\^{o}'s Lemma, 
\[
df(\check{X}_t) = -2\check{X}_t^T \mathbf{L}\check{X}_tdt-2\alpha(t)\left(\nabla U_v(X(t))-\mathbf{1}\otimes\frac{1}{m}\sum_{i=1}^m \nabla u_i(X^{(i)}(t))\right)^T \check{X}_t dt
+\sigma\alpha(t) dt+\sqrt{2\sigma\alpha(t)} \check{X}_t^T d\check{B}_t.
\]
Note that $\check{X}_t$ is in the nullspace of $\mathbf{L}$ if and only if $\check{X}_t=0$. Thus, $\check{X}_t^T \mathbf{L}\check{X}_t\ge \bar{\beta}\|\check{X}_t\|^2$. And so we have,
\[
\begin{array}{l}
\mathbb{E}_t(f(\check{X}_t)) = f(\check{X}_0)-\mathbb{E}\left[\int_{s=0}^t \left\{2\check{X}_t^T \mathbf{L}\check{X}_t
+2\alpha(s)
\left(\nabla U_v(X(t))-\mathbf{1}\otimes\frac{1}{m}\sum_{i=1}^m \nabla u_i(X^{(i)}(t))\right)^T \check{X}_s 
-\frac 12\sigma\alpha(s) \right\}ds\right] \\
\qquad\le f(\check{X}_0)-2\mathbb{E}\left[\int_{s=0}^t\left\{\bar{\beta}f(\check{X}_s)- \alpha(s)L \|\check{X}_s\|^2 
-\frac 12\alpha(s)\sigma \right\}ds\right] \\ 
\qquad \le f(\check{X}_0)-2\mathbb{E}\left[\int_{s=0}^t \left\{(\bar\beta-\alpha(s)L) f(\check{X}_s) 
-\frac 12\alpha(s)\sigma \right\}ds\right] 
\end{array}
\]
Let $\hat T$ be such that $\alpha(\hat T)\le \frac{\bar\beta}{2L}$, i.e., $\hat T=\frac{2L}{\bar\beta}-1$. Then,
\[
\mathbb{E}_t(f(\check{X}_t))\le \mathbb{E}[f(\check{X}_0)]+2\mathbb{E}\left[\int_{s=0}^{\hat T}\left\{(\alpha(0)L-\frac 12\bar\beta) f(\check{X}_s) 
+\frac 12\alpha(s)\sigma \right\}ds\right] 
-2\mathbb{E}\left[\int_{s=\hat T}^t \left\{\frac{1}{2}\bar\beta f(\check{X}_s) -\frac 12\alpha(s)\sigma \right\}ds\right] 
\]
Recall the standard Grownwall's inequality,
\[
u(t)\le v+\int_a^t \beta(s) u(s) ds \Longrightarrow u(t)\le v \exp\left(\int_a^t \beta(s) ds\right)
\]
We get that,
\[
\mathbb{E}_t(f(\check{X}(\hat T))) \le \left(\mathbb{E}[f(\check{X}_0)]+\frac{\sigma}{2} \int_{s=0}^{\hat t} \alpha(s)ds\right)e^{(\alpha(0)L-\bar\beta)\hat T}
\le \left(\mathbb{E}[f(\check{X}_0)]+\sigma \log \left(\frac{2L}{\bar\beta}\right)\right)e^{(\alpha(0)L-\bar\beta)(\frac{2L}{\bar\beta}-1)}
\]

Now define $T_\epsilon$ as a stopping time at which it holds that $\mathbb{E}[f(\check{X}_s)]\le \epsilon$. Consider,
\[
\mathbb{E}(f(\check{X}_{T_\epsilon}))\le \mathbb{E}[f(\check{X}_{\hat T})]-\mathbb{E}\left[\int_{s=\hat T}^{T_\epsilon} \left\{\bar\beta f(\check{X}_s) -\alpha(s)\sigma \right\}ds\right] 
\]
Applying Grownwall's inequality again,
\[
\begin{array}{l}
\mathbb{E}(f(\check{X}_{T_\epsilon}))\le \left(\mathbb{E}[f(\check{X}_{\hat T})]+\sigma \int_{s=\hat T}^{\mathbb{E}[T_\epsilon]} \alpha(s) ds\right) e^{-\bar \beta(\mathbb{E}[T_\epsilon]-\hat T)} \\ \qquad\qquad \le  \left(\mathbb{E}[f(\check{X}_{\hat T})]+\sigma \log(1+\mathbb{E}[T_\epsilon])-\sigma\log(1+\hat T)\right) e^{-\bar \beta(\mathbb{E}[T_\epsilon]-\hat T)} \\ \qquad\qquad \le 
\left(\mathbb{E}[f(\check{X}_{\hat T})]+\sigma \mathbb{E}[T_\epsilon]\right) e^{-\bar \beta(\mathbb{E}[T_\epsilon]-\hat T)}
\end{array}
\]
which implies,
\[
\log(\epsilon) \le \log\left(\mathbb{E}[f(\check{X}_{\hat T})]+\sigma \mathbb{E}[T_\epsilon]\right)-\bar \beta(\mathbb{E}[T_\epsilon]-\hat T)
\le \mathbb{E}[f(\check{X}_{\hat T})]+\left(\sigma-\bar\beta\right)\mathbb{E}[T_\epsilon]+\frac{2L}{\bar\beta}-1
\]
and thus,
\[
\mathbb{E}[T_\epsilon] \le \frac{1}{\bar\beta-\sigma} \left[-\log\epsilon +\left(\mathbb{E}[f(\check{X}_0)]+\sigma \log \left(\frac{2L}{\bar\beta}\right)\right)e^{(\alpha(0)L-\bar\beta)(\frac{2L}{\bar\beta}-1)}+\frac{2L}{\bar\beta}-1\right]
\]

\end{proof}

Now consider the discrete result, i.e., a notion of convergence rate to consensus for the
stochastic process defined by the Euler-Maruyama discretization $X_k$. 

We will need to use a classic convergence Lemma
\begin{lemma}\cite[Lemma 2.5]{polyak1987introduction}\label{lem:polyakone}
Let $u_k\ge 0$ and,
\[
u_{k+1}\le \left(1-\frac{c}{k^s}\right) u_k+\frac{d}{k^{t}}
\]
with $0<s<1$ and $s<t$ and $c,d>0$. Then,
\[
u_k\le \frac{d}{c}\frac{1}{k^{t-s}}+o\left(\frac{1}{k^{t-s}}\right)
\]
\end{lemma}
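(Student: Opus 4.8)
Since the conclusion concerns only the asymptotics in $k$, the plan is to discard an initial segment and work for $k\ge k_0$, where $k_0$ is large enough that $c/k^s<1$ for $k\ge k_0$ (possible since $0<s<1$). Abbreviate $a_k=c/k^s$ and $b_k=d/k^t$, fix an arbitrary $\eta>0$, and set $\varphi_k=(1+\eta)\frac{d}{c}k^{-(t-s)}$. The target is $u_k\le\varphi_k$ for all sufficiently large $k$; granting this, $\limsup_{k\to\infty}k^{t-s}u_k\le(1+\eta)d/c$, and letting $\eta\downarrow 0$ yields $u_k\le\frac{d}{c}k^{-(t-s)}+o(k^{-(t-s)})$.

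The first ingredient is a \emph{drift identity} for the comparison curve: since $a_k\varphi_k=(1+\eta)b_k$, we have $(1-a_k)\varphi_k+b_k=\varphi_k-\eta b_k$. Hence, whenever $u_k\le\varphi_k$ and $k\ge k_0$, the recursion gives $u_{k+1}\le\varphi_k-\eta b_k$, so it suffices to verify $\varphi_k-\varphi_{k+1}\le\eta b_k$ in order to propagate $u_{k+1}\le\varphi_{k+1}$. By the mean value theorem applied to $x\mapsto x^{-(t-s)}$ one gets $\varphi_k-\varphi_{k+1}\le(1+\eta)\frac{d}{c}(t-s)k^{-(t-s+1)}$, while $\eta b_k=\eta d\,k^{-t}$; the needed inequality is therefore $(1+\eta)\frac{t-s}{c}k^{s-1}\le\eta$, which holds for all $k\ge k_1$ for some $k_1=k_1(\eta)\ge k_0$, precisely because $s<1$ forces $k^{s-1}\to 0$. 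Thus the bound $u_k\le\varphi_k$ is \emph{self-propagating}: if it holds at some index $k_2\ge k_1$, it holds for all $k\ge k_2$.

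It remains to exhibit one index $\ge k_1$ with $u_k\le\varphi_k$, and I would do this by contradiction: suppose $u_k>\varphi_k$ for every $k\ge k_1$. Then $b_k=d\,k^{-t}=(d\,k^{-(t-s)})k^{-s}<\frac{c}{1+\eta}u_k\,k^{-s}=\frac{1}{1+\eta}a_k u_k$, so $u_{k+1}\le(1-a_k)u_k+\frac{1}{1+\eta}a_k u_k=\big(1-\tfrac{\eta}{1+\eta}a_k\big)u_k$ for all $k\ge k_1$. Iterating and using $\sum_{j\ge k_1}a_j=\infty$ together with $\sum_{j=k_1}^{k-1}a_j\ge\frac{c}{1-s}\big(k^{1-s}-k_1^{1-s}\big)$ gives $u_k\le u_{k_1}\exp\!\big(-\tfrac{\eta c}{(1+\eta)(1-s)}(k^{1-s}-k_1^{1-s})\big)$, which decays faster than any power of $k$. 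In particular $k^{t-s}u_k\to 0$, contradicting $u_k>\varphi_k>0$. Hence $u_{k_2}\le\varphi_{k_2}$ for some $k_2\ge k_1$, and self-propagation then gives $u_k\le\varphi_k$ for all $k\ge k_2$.

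The only step that requires genuine care is this last one: the recursion by itself supplies no base case for the induction, and the resolution is the observation that remaining above the comparison curve forces a stretched-exponential decay that is itself incompatible with remaining above the curve. Everything else is elementary — the drift identity, the mean value estimate, and the exponent inequality $t-s+1>t$, which is just $s<1$. (A more computational route avoids the contradiction argument by unrolling the recursion as $u_{k+1}\le\big(\prod_{j=k_0}^{k}(1-a_j)\big)u_{k_0}+\sum_{i=k_0}^{k}\big(\prod_{j=i+1}^{k}(1-a_j)\big)b_i$, bounding the first term by $o(k^{-p})$ for every $p$ via $\prod_j(1-a_j)\le\exp(-\sum_j a_j)$, and estimating the second sum by comparison with $\int x^{-t}\exp(-\tfrac{c}{1-s}(k^{1-s}-x^{1-s}))\,dx$.)
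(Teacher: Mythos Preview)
Your argument is correct. The paper does not actually prove this lemma: it is quoted verbatim from \cite{polyak1987introduction} and used as a black box, so there is no ``paper's own proof'' to compare against. What you have supplied is a self-contained proof of the cited result, and each step checks out: the drift identity $(1-a_k)\varphi_k+b_k=\varphi_k-\eta b_k$ is an exact computation, the mean-value bound $\varphi_k-\varphi_{k+1}\le(1+\eta)\tfrac{d}{c}(t-s)k^{-(t-s+1)}$ combined with $s<1$ gives self-propagation for $k\ge k_1(\eta)$, and the contradiction step correctly exploits $\sum_j j^{-s}=\infty$ (again $s<1$) to force stretched-exponential decay incompatible with $u_k>\varphi_k$. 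Passing $\eta\downarrow 0$ through $\limsup_k k^{t-s}u_k\le(1+\eta)d/c$ is exactly what the one-sided $o(\cdot)$ conclusion means.

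Your approach is in fact the classical one Polyak uses: compare against the trial sequence $(1+\eta)\tfrac{d}{c}k^{-(t-s)}$, establish forward invariance, and rule out the alternative by a product bound. The parenthetical ``unrolling'' route you mention at the end also works and is the other standard proof; either would be acceptable here. One very minor comment: you should note explicitly that self-propagation uses $1-a_k\ge 0$ (so that $(1-a_k)u_k\le(1-a_k)\varphi_k$), which is why the initial cutoff $k_0$ with $c/k_0^s<1$ is needed; you do say this at the outset but it is worth flagging at the point of use.
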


\begin{theorem}
The distribution associated with the consensus error $\check{X}_{k} := X_{k}-1\otimes \bar{X}_{k} $ converges in $W_2(\cdot,\cdot)$ distance, for any $\gamma>0$,
to the Dirac delta at zero at a rate of
\[
O\left(\frac{1}{(1+k)^{1/2-\gamma}}\right)
\]
\end{theorem}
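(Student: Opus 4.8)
The target quantity is really a second moment. Since the only coupling of a probability measure $\mu$ on $\real^{md}$ with the Dirac mass $\delta_0$ is $\mu\otimes\delta_0$, one has $W_2\big(\mathrm{Law}(\check X_k),\delta_0\big)^2=\mathbb E\|\check X_k\|^2=:u_k$, so it suffices to prove $u_k=O\big((1+k)^{-(1-2\gamma)}\big)$ for every $\gamma\in(0,\tfrac12)$; taking square roots then gives the claimed rate. My plan has three parts: (i) write down the one-step recursion for $\check X_k$; (ii) turn it into a scalar recursion for $u_k$ of geometric-contraction type with forcing of order $\alpha_{k+1}$; (iii) close with Lemma~\ref{lem:polyakone}.

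For (i): stacking the per-agent scheme \eqref{eq:langevindistdisc} gives $X_{k+1}=(\mathbf I-h\mathbf L)X_k-\alpha_{k+1}h\nabla U_v(X_k)+\sqrt{2\sigma\alpha_{k+1}h}\,Z_{k+1}$, and subtracting $1\otimes\bar X_{k+1}$ as given by \eqref{eq:langevindistdiscavg} — using $\mathbf L(1\otimes v)=0$, so that $\mathbf I-h\mathbf L$ fixes the consensus subspace, together with $1\otimes\bar Z_{k+1}=(\tfrac1m\mathbf 1\mathbf 1^T\otimes I_d)Z_{k+1}$ — yields
\[
\check X_{k+1}=(\mathbf I-h\mathbf L)\check X_k-\alpha_{k+1}h\,r_k+\sqrt{2\sigma\alpha_{k+1}h}\;\Pi^{\perp}Z_{k+1},
\]
where $\Pi^{\perp}:=\mathbf I-\tfrac1m\mathbf 1\mathbf 1^T\otimes I_d$ is the orthogonal projector onto the disagreement subspace and $r_k:=\Pi^{\perp}\nabla U_v(X_k)=\nabla U_v(X_k)-1\otimes\tfrac1m\sum_i\nabla u_i(X^{(i)}_k)$. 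One checks inductively that $\check X_k\in\range\Pi^{\perp}$ for all $k$ (both $\mathbf I-h\mathbf L$ and $r_k$ leave this subspace invariant, the latter because the blocks of $r_k$ sum to zero). On $\range\Pi^{\perp}$ we have $\|(\mathbf I-h\mathbf L)\check X_k\|\le(1-h\bar\beta)\|\check X_k\|$ for $h$ small enough, and Assumption~\ref{as} gives $\|r_k\|\le C\|\check X_k\|$ for a constant $C$, exactly as in the proof of Theorem~\ref{th:cons}.

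For (ii): condition on $\mathcal F_k$. Since $Z_{k+1}$ is centred and independent of $\mathcal F_k$ the cross term drops out and $\mathbb E\|\Pi^{\perp}Z_{k+1}\|^2=(m-1)d$ (the rank of $\Pi^{\perp}$), so from the triangle inequality and the two bounds of step (i),
\[
u_{k+1}\le\big(1-h\bar\beta+C h\,\alpha_{k+1}\big)^2 u_k+2\sigma(m-1)d\,h\,\alpha_{k+1}.
\]
By \eqref{eq:alphaform}, $\alpha_{k+1}\downarrow 0$, so there is $k_1$ with $C h\,\alpha_{k+1}\le h\bar\beta/2$ for all $k\ge k_1$; then $(1-h\bar\beta+Ch\alpha_{k+1})^2\le 1-h\bar\beta/2$ and, for $k\ge k_1$,
\[
u_{k+1}\le\Big(1-\tfrac{h\bar\beta}{2}\Big)u_k+\frac{D_0}{k},\qquad D_0:=2\sigma(m-1)d\,h .
\]

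For (iii): fix $\gamma\in(0,\tfrac12)$ and set $s:=2\gamma\in(0,1)$. Because $1-h\bar\beta/2$ is a constant strictly below $1$, there is $k_2\ge k_1$ with $1-h\bar\beta/2\le 1-k^{-s}$ for all $k\ge k_2$, so the recursion has the form $u_{k+1}\le(1-k^{-s})u_k+D_0 k^{-1}$ required by Lemma~\ref{lem:polyakone} with $c=1$, $t=1>s$; the asymptotic conclusion is insensitive to the finitely many indices $k<k_2$. The lemma gives $u_k=O(k^{-(1-s)})=O\big((1+k)^{-(1-2\gamma)}\big)$, hence $W_2\big(\mathrm{Law}(\check X_k),\delta_0\big)=\sqrt{u_k}=O\big((1+k)^{-(1/2-\gamma)}\big)$. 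The one delicate point is precisely this last step: a fixed contraction $1-ch$ against forcing of order $1/k$ actually forces $u_k=O(1/k)$, i.e.\ the sharper rate $O((1+k)^{-1/2})$, but Lemma~\ref{lem:polyakone} is only available with contraction exponent $s<1$, and this slack is exactly what costs the arbitrarily small loss $\gamma$. Everything else — smallness of $h$ for the spectral contraction, the estimate $\|r_k\|\le C\|\check X_k\|$, and tracking the transient indices $k_1,k_2$ — is routine.
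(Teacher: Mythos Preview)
Your proof is correct and actually cleaner than the paper's. The key observation you make at the outset --- that for a Dirac target the Wasserstein distance collapses to a second moment, $W_2(\mathrm{Law}(\check X_k),\delta_0)^2=\mathbb E\|\check X_k\|^2$ --- lets you bypass any coupling construction entirely and work directly with the scalar recursion for $u_k=\mathbb E\|\check X_k\|^2$. The paper instead introduces an auxiliary Gaussian process $Y_k\sim\mathcal N(0,(2\sigma h)^2/(1+k))$, uses the triangle inequality $W_2(\nu_k,\delta_0)\le W_2(\nu_k,\pi_k)+W_2(\pi_k,\delta_0)$, and bounds the first term by $\mathbb E\|\check X_k-Y_k\|$ via a coupling; only then does it invoke Lemma~\ref{lem:polyakone} on the first-moment recursion for $\mathbb E\|\check X_k-Y_k\|$. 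Your route avoids this detour and the associated bookkeeping, and also makes it transparent (as you note) that the $\gamma$-loss is purely an artifact of forcing Lemma~\ref{lem:polyakone}, which requires $s<1$: with a fixed contraction factor $1-h\bar\beta/2$ against $O(1/k)$ forcing one actually gets $u_k=O(1/k)$, hence $W_2=O(k^{-1/2})$. The paper's coupling device would be the natural tool if the target were a nondegenerate distribution, but for $\delta_0$ it adds nothing.
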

\begin{proof}
We have that 
\[
\begin{array}{l}
\check{X}_{k+1} := X_{k+1}-1\otimes \bar{X}_{k+1} \\ \qquad = 
\left((1-h)\mathbf{I}-h\mathbf{W}\right)X_k-\mathbf{1}\otimes\bar{X}_{k}-\alpha_{k+1} h\nabla U_v(X_k) + \sqrt{2\sigma\alpha_{k+1}h} Z_{k+1} \\ \qquad\qquad\qquad\qquad+ 
\frac{\alpha_{k+1}h}{m}\sum_{i=1}^m \mathbf{1}\otimes \nabla u_i(X^{(i)}_k) -\sqrt{2\sigma\alpha_{k+1} h} 1\otimes \bar Z_{k+1} \\
\qquad = \check{X}_k-h\mathbf{L}X_k-\alpha_{k+1}h \left(\nabla U_v(X_k)-\frac{1}{m}\sum_{i=1}^m \mathbf{1}\otimes \nabla u_i(X^{(i)}_k)\right)+\sqrt{2\sigma\alpha_{k+1}h }\check{Z}_{k+1} \\
\qquad = (1-h)\mathbf{L}\check{X}_k-\alpha_{k+1}h \left(\nabla U_v(X_k)-\frac{1}{m}\sum_{i=1}^m \mathbf{1}\otimes \nabla u_i(X^{(i)}_k)\right)+\sqrt{2\sigma\alpha_{k+1}h }\check{Z}_{k+1}
\end{array}
\]

Let $\nu_k$ be the distribution associated with the stochastic process $\check{X}_k$.
Define the distribution for consensus of $\check{X}$ to be $\pi$, the delta function at zero, i.e., $\pi = \delta_0(x)$. 
Define the distribution $\pi_k$ to be the normal distribution with standard deviation $2\sigma h/\sqrt{1+k}$. 
  
Construct now a stochastic variable $Y_0\sim \pi_0$ such that the Wasserstein distance to the initial distribution is
minimized, i.e., $W_2(\nu_0,\pi_0)=\|Y_0-\check{X}_{0}\|$ and $Y_{k+1}=\sqrt{2\sigma\alpha_{k+1}h}\check{Z}_{k+1}$.
Note that this is a process that for each $k$ has $\pi_k$ as its associated distribution. 
Since the support is $\mathbb{R}^d$, each $Y_k$ is in the support of $\pi_k$. 
Since zero, the only vector in the support of $\pi$, is also
in the support of $\pi_k$, it holds that $W_2(\pi,\pi_k)=\sqrt{\int_{\mathbb{R}^d} \|x\|^2 d\pi_k}=2\sigma h/\sqrt{k+1}$ and we can finally write,

\begin{equation}\label{eq:wasdistcons}
W_2(\nu_k,\pi) \le W_2(\nu_k,\pi_k)+W_2(\pi_k,\pi)\le \mathbb{E}[\|\check{X}_k-Y_k\|_{L^2}]+2\sigma h/\sqrt{k+1}
\end{equation}

Consider now $\check{X}_{k+1}-Y_{k+1}$,
\[
\begin{array}{l}
\check{X}_{k+1}-Y_{k+1} = (1-h)\mathbf{L}\check{X}_k-\alpha_{k+1}h \left(\nabla U_v(X_k)-\frac{1}{m}\sum_{i=1}^m \mathbf{1}\otimes \nabla u_i(X^{(i)}_k)\right)
\end{array}
\]
and thus, using Assumption~\ref{as} we can see that,
\[
\|\check{X}_{k+1}-Y_{k+1}\| = (1-h-2\alpha_{k+1}hmG)\|\check{X}_k-Y_k\| +\|Y_k\|
\]
and so,
\[
\mathbb{E}\|\check{X}_{k+1}-Y_{k+1}\| = (1-h-2\alpha_{k+1}hmG)\mathbb{E}\|\check{X}_k-Y_k\| +\mathbb{E}\|Y_k\|
\]

Finally we can apply Lemma~\ref{lem:polyakone} to conclude that for any $\gamma>0$, for sufficiently large $k\ge K$,
\[
\mathbb{E}\|\check{X}_{k+1}-Y_{k+1}\| = \left(1-\frac{2h}{(k+1)^{\gamma}}\right)\mathbb{E}\|\check{X}_k-Y_k\| +\frac{2\sigma h}{\sqrt{k+1}} = O\left(\frac{1}{k^{1/2-\gamma}}\right)
\]
Plugging this into~\eqref{eq:wasdistcons} yields the final result.

%Now consider $\check{X}_{k+1}$ as a sum of past iterates,
%\[
%\begin{array}{l}
%\check{X}_{k+1} = \sum_{j=0}^{k}\left[ \alpha_{j+1}h((1-h)\mathbf{L})^{k-1-j}\left(\nabla U_v(X_k)-\frac{1}{m}\sum_{i=1}^m 1\otimes \nabla u_i(X^{(i)}_k)\right)\right.
%\\ \qquad\qquad\qquad\qquad\qquad+\left.((1-h)\mathbf{L})^{k-1-j}\sqrt{2\sigma\alpha_{k+1} h}\check{Z}_{k+1}\right]
%\end{array}
%\]
%and so by the spectral properties of $\mathbf{L}$ we have that,
%\[
%\begin{array}{l}
%\|\check{X}_{k+1}-Y_k\|\\ \le h\sum_{j=0}^{k+1} \left[\alpha_{j+1}((1-h)\beta)^{k-1-j}(mG\|\check{X}-Y_k\|+\|Y_k\|+\|(((1-h)\mathbf{L})^{k-1-j}-I)\sqrt{2\sigma\alpha_{k+1}h} \check{Z}%_{k+1}\right]
%\end{array}
%\]
%$=O\left(\frac{1}{\sqrt{k+1}}\right)$

%where the last result is standard (e.g.,~\cite[Proposition 3]{zeng2018nonconvex}).
\end{proof}

\section{Convergence}

Finally we show that the average process $\bar{X}_k$ converges to a minimizer of $U(\cdot)$.

Let $S(t)=\int_{0}^t \alpha(s) ds$ and let $T:\mathbb{R}\to\mathbb{R}$ be the inverse of $S$ such that
$S(T(t))=t$ (which exists since $T$ is increasing. Letting $Y(t) = \bar{X}(T(t))$, we have
$\frac{d}{dt}Y(t)=\frac{d\bar{X}(T(t))}{dt}\frac{dT(t)}{dt}$ and $\frac{dS(T(t))}{dt}\frac{dT(t)}{dt} = 1$
so $\frac{dT(t)}{dt} = \frac{1}{\alpha(T(t))}$ (see~\cite{swenson2019distributed}). Now the process $Y_t$
satisfies,

\[
dY_t = -h\nabla U(\bar{X}(T(t)))dt-h\left(\frac{1}{m}\sum_{i=1}^m \nabla u_i(X^{(i)}(T(t)))-\nabla U(\bar{X}(T(t)))\right)dt
+\frac{\sqrt{2\sigma\alpha(T(t))h}}{\alpha(T(t))} d\bar{B}(T(t))
\]
and by the scale invariance of a Wiener process ($\beta^{-1} W_{\beta^2 t}=W_t$ for all $\beta>0$) this is equivalent to,
\[
dY_t = -h\nabla U(Y_t)dt-h\left(\frac{1}{m}\sum_{i=1}^m \nabla u_i(X^{(i)}(T(t)))-\nabla U(\bar{X}(T(t)))\right)dt
+\sqrt{2\sigma h} d\bar{B}_t
\]
with discretization,
\[
Y_{k+1} = Y_k-h\nabla U(Y_k)-h\left(\frac{1}{m}\sum_{i=1}^m \nabla u_i(Y^{(i)}_k)-\nabla U(\bar{Y}_k)\right)
+\sqrt{2\sigma h} \bar{Z}_{k+1}
\]
We observe that with $\alpha_k=\frac{1}{1+k}$, it holds that $S_k=\log(1+k)$ and so $T(t_k) = e^{t_k}-1$. Thus
$\alpha_k = \frac{1}{e^k}\le\frac{1}{1+k}$.

Let us redefine $\nu_k$ to be the distribution associated with the stochastic process $Y_k$ and $\pi$ the stationary process
associated with $U(x)$. Consider that in general now we have iteration dependant stepsize $h_k$.

To derive our diminishing step-size convergence result, we recall a useful Lemma.
\begin{lemma}\cite[Lemma 2.4]{polyak1987introduction}\label{lem:polyak}
Let $u_k\ge 0$ and,
\[
u_{k+1}\le \left(1-\frac{c}{k}\right) u_k+\frac{d}{k^{p+1}}
\]
with $d>0$, $p>0$ and $c>0$ and $c>p$. Then,
\[
u_k\le d(c-p)^{-1}k^{-p}+o(k^{-p})
\]
\end{lemma}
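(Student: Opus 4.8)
The plan is to pass to the rescaled sequence $v_k := k^p u_k$, turn the hypothesis into a recursion of the familiar $1/k$-type, and then read off $\limsup_k v_k \le d/(c-p)$, which is exactly the assertion $u_k \le d(c-p)^{-1}k^{-p}+o(k^{-p})$. Multiplying the hypothesis by $(k+1)^p$ and using only $(1+1/k)^p(1-c/k) = 1-(c-p)/k + O(1/k^2)$ and $(1+1/k)^p/k = 1/k + O(1/k^2)$, one obtains, with $c' := c-p>0$,
\[
v_{k+1} \le \left(1-\frac{c'}{k}\right)v_k + \frac{d}{k} + O\!\left(\frac{v_k+1}{k^2}\right).
\]

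The argument then splits into two stages. First I establish that $v_k$ is bounded, i.e.\ $u_k = O(k^{-p})$: fixing any $K_0 \ge c$ and setting $B := \max\{u_{K_0}K_0^p,\, 2d/(c-p)\}$, a one-line induction with the super-solution $u_k \le B k^{-p}$ closes for all $k\ge K_0$, using $1-c/k\ge 0$ and the convexity bound $(1+1/k)^{-p}\ge 1-p/k$, the step going through precisely because $B(c-p)>d$. With $v_k$ bounded, the error term above is $O(1/k^2)$, hence a summable sequence $e_k$. Second, I unroll the refined recursion from a large starting index $K$,
\[
v_n \le \left(\prod_{k=K}^{n-1}\Bigl(1-\frac{c'}{k}\Bigr)\right)v_K + \sum_{j=K}^{n-1}\left(\frac{d}{j}+e_j\right)\prod_{k=j+1}^{n-1}\Bigl(1-\frac{c'}{k}\Bigr),
\]
and estimate $\prod_{k=j+1}^{n-1}(1-c'/k)\le\exp\bigl(-c'\sum_{k=j+1}^{n-1}1/k\bigr)\asymp (j/n)^{c'}$. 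The leading sum is then $\asymp d\,n^{-c'}\sum_{j=K}^{n-1}j^{c'-1}\to d/c'$ as $n\to\infty$ by the routine asymptotics $\sum_{j\le n}j^{a-1}\sim n^a/a$ for $a>0$; the first term tends to $0$ like $(K/n)^{c'}$; and the error contribution is bounded by $\sum_{j\ge K}|e_j|$, arbitrarily small for $K$ large. Letting $n\to\infty$ and then $K\to\infty$ gives $\limsup_n v_n \le d/(c-p)$, hence $u_k\le d(c-p)^{-1}k^{-p}+o(k^{-p})$.

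The only delicate point is the bookkeeping of the transient: the crude constant $B$ produced by the first $K_0$ iterates must not leak into the asymptotic constant $d/(c-p)$, which is why the sharp bound has to come from the two-stage scheme (first boundedness, then unrolling with a drifting start point $K$) rather than from a single global induction. Everything else — the product/sum comparisons and the elementary inequalities $(1+1/k)^{-p}\ge 1-p/k$ and $1-x\le e^{-x}$ — is routine and introduces no new idea.
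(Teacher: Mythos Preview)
The paper does not prove this lemma at all: it is quoted verbatim from Polyak's book and used as a black box, so there is no ``paper's own proof'' to compare against. Your sketch is a standard and correct way to establish the result: the rescaling $v_k=k^p u_k$ and the asymptotic expansion $(1+1/k)^p(1-c/k)=1-(c-p)/k+O(1/k^2)$ reduce the problem to a Chung-type recursion, the crude induction with supersolution $Bk^{-p}$ gives boundedness (and you correctly note that this needs $k\ge c$ so the coefficient is nonnegative), and the two-stage unrolling with a drifting start index $K$ is exactly what is needed to avoid the transient constant contaminating the sharp asymptotic constant $d/(c-p)$. The only place where ``routine'' hides a little work is the product estimate: $\prod_{k=j+1}^{n-1}(1-c'/k)$ is not merely $\le (j/n)^{c'}$ but is asymptotically equal to it (via $\Gamma(n-c')/\Gamma(n)\sim n^{-c'}$), and you need the asymptotic equality, not just the upper bound, to land on the exact constant $d/c'$; your double limit $n\to\infty$ then $K\to\infty$ handles this correctly.
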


We are now ready to prove the main convergence result.

\begin{theorem}
If $h_k$ is constant, i.e., $h_k=h$ and $h<\min\left\{\frac{1}{L},m\right\}$ then,
\[
\lim_{K\to\infty} W_2(\nu_{K},\pi)\le \frac{\chi L (hd)^{1/2}}{m}
\]
with $\chi=7\sqrt{2}/6$. If $h_k=\frac{1}{k}$ then for $k\ge \frac{L+m}{2}-1$,
\[
W_2(\nu_{k},\pi) = O\left(k^{-1/2}\right)
\]
\end{theorem}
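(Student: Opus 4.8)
The plan is to recognize the chain for $Y_k$ as the unadjusted Langevin algorithm (ULA) with step $h_k$ and target $\pi$, run with the \emph{inexact} gradient $\nabla U(Y_k)+b_k$, where $b_k:=\frac{1}{m}\sum_{i=1}^m\nabla u_i(Y^{(i)}_k)-\nabla U(\bar{Y}_k)$ is the averaging-bias term. By Assumption~\ref{as} this perturbation is controlled by the consensus error in the rescaled time variable, $\|b_k\|=O(\|\check{Y}_k\|)$, so $\mathbb{E}\|b_k\|\to 0$ by the analysis of the previous section. First I would record two standard consequences of Assumption~\ref{as}: (i) for $h\le 2/(L+m)$ --- which the hypotheses $h<\min\{1/L,m\}$ and $k\ge\frac{L+m}{2}-1$ are there to enforce --- the map $x\mapsto x-h\nabla U(x)$ is a $(1-mh)$-contraction in $\|\cdot\|$; and (ii) $\pi$ has finite second moment with $\mathbb{E}_\pi\|\nabla U\|^2=O(Ld)$ by a Stein/integration-by-parts identity, which is needed to bound the discretization error.

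The heart of the argument is a one-step Wasserstein estimate. Couple $Y_k\sim\nu_k$ optimally with $W_{kh}\sim\pi$, where $(W_t)$ solves the exact Langevin SDE in stationarity, and couple $Y_{k+1}$ with $W_{(k+1)h}$ synchronously (identical driving increment). Subtracting the two updates,
\[
Y_{k+1}-W_{(k+1)h}=\bigl[(Y_k-W_{kh})-h(\nabla U(Y_k)-\nabla U(W_{kh}))\bigr]-\int_{kh}^{(k+1)h}\bigl(\nabla U(W_s)-\nabla U(W_{kh})\bigr)\,ds-h\,b_k.
\]
The bracketed term has norm at most $(1-mh)\|Y_k-W_{kh}\|$ by strong convexity; the integral, via $L$-Lipschitzness and the It\^{o} moment bound $\mathbb{E}\|W_s-W_{kh}\|^2=O(dh)$ on a step, is $O(L(dh^3)^{1/2})$, where the sharp constant $\chi=7\sqrt{2}/6$ comes from the optimized one-step bound standard in the ULA literature rather than the crude estimate; and the last term is $h\,\mathbb{E}\|b_k\|$. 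Taking $L^2$-norms and optimizing over couplings gives the recursion
\[
W_2(\nu_{k+1},\pi)\le(1-mh_k)\,W_2(\nu_k,\pi)+\chi\,L(dh_k^3)^{1/2}+h_k\,\mathbb{E}\|b_k\|.
\]

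For constant $h_k=h$ I would iterate this recursion: the contraction kills $(1-mh)^K W_2(\nu_0,\pi)$, the summed discretization term is a geometric series bounded by $\chi L(hd)^{1/2}/m$, and the bias contribution $\sum_{j}(1-mh)^{K-1-j}h\,\mathbb{E}\|b_j\|$ is a geometrically weighted average of the null sequence $\mathbb{E}\|b_j\|$ and hence tends to $0$; letting $K\to\infty$ yields $\lim_K W_2(\nu_K,\pi)\le\chi L(hd)^{1/2}/m$. For $h_k=\tfrac{1}{1+k}$, put $u_k:=W_2(\nu_k,\pi)$, so $u_{k+1}\le(1-\tfrac{m}{1+k})u_k+\tfrac{\chi L\sqrt{d}}{(1+k)^{3/2}}+\tfrac{\mathbb{E}\|b_k\|}{1+k}$; since under the time rescaling the consensus noise is injected at the scale $\alpha(T(t_k))\asymp e^{-k}$, the bias term decays faster than any polynomial and is dominated by the $(1+k)^{-3/2}$ discretization term, so Lemma~\ref{lem:polyak} with $c=m$ and $p=\tfrac12$ (legitimate once $k\ge\frac{L+m}{2}-1$, so the contraction is active) gives $u_k=O(k^{-1/2})$.

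The step I expect to be the main obstacle is the honest accounting of $b_k$: it must be expressed through the consensus error of the \emph{rescaled} process and shown to decay quickly and uniformly enough that it neither obstructs the $K\to\infty$ limit in the constant-step case nor degrades the $k^{-1/2}$ rate in the diminishing-step case. A secondary difficulty is extracting the precise constant $\chi=7\sqrt{2}/6$, which requires tight --- not merely order-of-magnitude --- control of $\mathbb{E}\|W_s-W_{kh}\|^2$ and of the drift cross-term in the one-step coupling.
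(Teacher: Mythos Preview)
Your proposal is correct and follows essentially the same route as the paper: both obtain the one-step recursion $W_2(\nu_{k+1},\pi)\le\rho_{k+1}\,W_2(\nu_k,\pi)+\chi L(dh_{k+1}^3)^{1/2}+(\text{bias term})$ and then iterate it (constant step) or feed it into Lemma~\ref{lem:polyak} (diminishing step). The only cosmetic difference is that the paper gets the recursion by quoting \cite[Proposition~2]{dalalyan2019user} for inexact ULA rather than rederiving the synchronous-coupling estimate, and it handles the bias by the blunt bound $\|\zeta_k\|\le L\alpha_k$ with $\alpha_k\le 1/(1+k)$ in the rescaled time, so the bias enters the recursion simply as $Lh_{k+1}\alpha_{k+1}$ --- exactly the $O((1+k)^{-2})$ term you anticipated being dominated by the discretization error.
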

\begin{proof}
We apply~\cite[Proposition 2]{dalalyan2019user} to $\nu_k$ with the deterministic bias bound $\|\zeta_k\|\le L\alpha_k$ to get,
\begin{equation}\label{eq:wrecursion}
W_2(\nu_{k+1},\pi) \le \rho_{k+1} W_2(\nu_k,\pi)+\chi L(h^3_{k+1} d)^{1/2}+Lh_{k+1}\alpha_{k+1} 
\end{equation}
where $\rho_k = \max(1-mh_{k+1},Lh_{k+1}-1)$.

If $h_k=h$ such that $\rho_k<1$ (i.e., $h<\min\left\{\frac{1}{L},m\right\}$) then we have,
\[
W_2(\nu_{K},\pi)\le \rho^K W(\nu_0,\pi)+\sum_{k=0}^K \rho^{K-k} \chi L(h^3 d)^{1/2}+\sum_{k=0}^K \frac{Lh\rho^{K-k}}{1+k}
\]
and we use~\cite[Lemma 7a]{di2016next} to conclude that the last term approaches zero.

Now assume that $h_k=\frac{1}{k}$, and let $\hat K$ be the first iteration from which $1-\frac{m}{k+1}\ge \frac{L}{k+1}-1$, or $\frac{L+m}{k+1}\le 2$,
i.e., $\hat K=\frac{L+m}{2}-1$. We have for $k\ge \hat K$,
\[
\begin{array}{l}
W_2(\nu_{k+1},\pi) \le \left(1-\frac{m}{k+1}\right) W_2(\nu_k,\pi)+\frac{\chi L d^{1/2}}{(k+1)^{3/2}}+\frac{L}{(k+1)^2} 
\\ \qquad\qquad \qquad \le \left(1-\frac{m}{2k}\right) W_2(\nu_k,\pi)+\frac{\chi L d^{1/2}+L}{k^{3/2}} 
\end{array}
\]

Applying Lemma~\ref{lem:polyak} we obtain that,
\[
W_2(\nu_k,\pi) \le \frac{\chi L d^{1/2}+L}{\left(\frac{m}{2}-\frac{1}{2}\right) k^{1/2}}+o(k^{-1/2})
\]

%Now assume that $h_k = \frac{1}{m(1+k)^{8/9}}$. Now we have,
%\[
%\begin{array}{l}
%W_2(\nu_{K},\pi)\le \left(\frac{(1+k)^{8/9}-1}{(1+k)^{8/9}}\right)^{K-k} W(\nu_0,\pi)+\sum_{k=0}^K \chi L d^{1/2}\frac{\left(\frac{(1+k)^{8/9}-1}{(1+k)^{8/9}}$\right)^{K-k}}{(1+k)^{4/3}}
%+\sum_{k=0}^K L\frac{\left(\frac{(1+k)^{8/9}-1}{(1+k)^{8/9}}\right)^{K-k}}{m(1+k)^{17/9}} \\
%\\ \qquad\qquad \le \left(\frac{(1+\hat K)^{8/9}-1}{(1+\hat K)^{8/9}}\right)^{K-\hat K} W(\nu_0,\pi)+\sum_{k=0}^{\hat K} \chi L d^{1/2}\frac{\left(\frac{(1+\hat K)^{8/9}-1}{(1+\hat K)^{8/9}}\right)^{K-\hat K}}{(1+k)^{4/3}} \\
%\\\qquad\qquad\qquad +\sum_{k=\hat K}^{K} \chi L d^{1/2}\frac{1}{(1+k)^{4/3}} +\sum_{k=0}^{\hat K} L\frac{\left(\frac{(1+\hat K)^{8/9}-1}{(1+\hat K)^{8/9}}\right)^{K-\hat K}}{m(1+k)^{17/9}}+\sum_{k=\hat K+1}^{K} L\frac{1}{m(1+k)^{17/9}}
%\end{array}
%\]
%where now set $\hat K=\hat K(K)$ such that $\hat K/K\to 0$. Each of these terms approaches zero by~\cite[Lemma 7a]{di2016next} and the fact that
%$\sum 1/(1+k)^\tau <\infty$ for $\tau>1$.
\end{proof}
\section{Conclusion}
In this paper we derived convergence rate results in appropriate notions of probability measure distance for the
stochastic gradient Langevin dynamics method in a decentralized setting. These results confirm that the performance
of this powerful method for obtaining the stationary distribution associated with log-concave potentials
extends to a distributed network communication setting. Given the promising theoretical results
we are aiming to perform an extensive set of numerical experiments.

\bibliographystyle{plain}
%\addcontentsline{toc}{chapter}{Bibliography}
\bibliography{refs}

\end{document}